\newtheorem{theorem}{Theorem}
\theoremstyle{plain}
\newtheorem{definition}{Definition}
\newtheorem{example}{Example}
\newtheorem{lemma}{Lemma}
\newtheorem{proposition}{Proposition}
\newtheorem{remark}{Remark}
\numberwithin{equation}{section}
\begin{document}
\title{CURVATURE PROPERTIES OF QUASI-PARA-SASAKIAN MANIFOLDS}
\author{I. K\"{u}peli Erken}
\address{Faculty of Engineering and Natural Sciences, Department of
Mathematics, Bursa Technical University, Bursa, TURKEY}
\email{irem.erken@btu.edu.tr}
\date{05.03.2018}
\subjclass[2010]{Primary 53B30, Secondary 53D10, 53D15}
\keywords{quasi-para-Sasakian manifold, paracosymplectic manifold, constant
curvature.}

\begin{abstract}
The present paper is devoted to quasi-Para-Sasakian manifolds. Basic
properties of such manifolds are obtained and general curvature identities
are investigated. Next it is proved that if $M$ is quasi-Para-Sasakian
manifold of constant curvature $K$. Then $K$ $\leq 0$ and $(i)~$if $K=0$,
the manifold is paracosymplectic, $(ii)$ if $K<0$, the quasi-para-Sasakian
structure of $M$ is obtained by a homothetic deformation of a para-Sasakian
structure.
\end{abstract}

\maketitle


\section{I\textbf{ntroduction}}

Almost paracontact metric structures are the natural odd-dimensional
analogue to almost paraHermitian structures, just like almost contact metric
structures correspond to the almost Hermitian ones. The study of almost
paracontact geometry was introduced by Kaneyuki and Williams in \cite%
{kaneyuki1} and then it was continued by many other authors. A systematic
study of almost paracontact metric manifolds was carried out in paper of
Zamkovoy, \cite{Za}. Comparing with the huge literature in almost contact
geometry, it seems that there are necessary new studies in almost
paracontact geometry. Therefore, paracontact metric manifolds have been
studied in recent years by many authors, emphasizing similarities and
differences with respect to the most well known contact case. Interesting
papers connecting these fields are (see, for example, \cite{DACKO}, \cite%
{Biz}, \cite{Welyczko}, \cite{Za}, and references therein).

Z. Olszak studied normal almost contact metric manifolds of dimension $3$ 
\cite{olszak}. He derive certain necessary and sufficient conditions for an
almost contact metric structure on manifold to be normal and curvature
properties of such structures and normal almost contact metric structures on
a manifold of constant curvature are studied. Recently, J. We\l yczko
studied curvature and torsion of Frenet-Legendre curves in $3$-dimensional
normal almost paracontact metric manifolds \cite{Welyczko1} and then normal
almost paracontact metric manifolds are studied by \cite{bejan}, \cite{irem1}%
, \cite{irem2}.

The notion of quasi-Sasakian manifolds, introduced by D. E. Blair in \cite%
{BL1}, unifies Sasakian and cosymplectic manifolds. By definition, a
quasi-Sasakian manifold is a normal almost contact metric manifold whose
fundamental $2$-form $\Phi :=g(\cdot ,\phi \cdot )$ is closed.
Quasi-Sasakian manifolds can be viewed as an odd-dimensional counterpart of
Kaehler structures. These manifolds studied by several authors(e.g. \cite%
{KANEMAKI}, \cite{OL1}, \cite{TAN1}).

Although quasi-Sasakian manifolds were studied by several different authors
and are considered a well-established topic in contact Riemannian geometry,
there do not exist any study about quasi-Para-Sasakian manifolds as far as
the author knows.

Motivated by these considerations, in this paper we make the first
contribution to investigate basic properties and general curvature
identities of quasi-Para-Sasakian manifolds.

The paper is organized in the following way.

Section $2$ is preliminary section, where we recall the definition of almost
paracontact metric manifold and quasi-Para-Sasakian manifolds.

In Section $3$, we study basic properties and curvature identities of such
manifolds.

In the short auxiliary Section $4$, \emph{D-}homothetic deformations of
quasi-Para-Sasakian structures are studied and in Section $5$, we
characterize quasi-Para-Sasakian manifolds of constant curvature. Finally,
an example is given.

\section{Preliminaries}

Let $M$ be a $(2n+1)$-dimensional differentiable manifold and $\phi $ is a $%
(1,1)$ tensor field, $\xi $ is a vector field and $\eta $ is a one-form on $%
M.$ Then $(\phi ,\xi ,\eta )$ is called an \textit{almost paracontact
structure} on $M$ if

\begin{itemize}
\item[(i)] $\phi^2 = Id-\eta\otimes\xi, \quad \eta(\xi)=1$,

\item[(ii)] the tensor field $\phi $ induces an almost paracomplex structure
on the distribution $D=$ ker $\eta ,$ that is the eigendistributions $D^{\pm
},$ corresponding to the eigenvalues $\pm 1$, have equal dimensions, $\text{%
dim}\,D^{+}=\text{dim}\,D^{-}=n$.
\end{itemize}

The manifold $M$ is said to be an \textit{almost paracontact manifold} if it
is endowed with an almost paracontact structure \cite{Za}.

Let $M$ be an almost paracontact manifold. $M$ will be called an \textit{%
almos}t \textit{paracontact metric manifold} if it is additionally endowed
with a pseudo-Riemannian metric $g$ of a signature $(n+1,n)$, i.e.%
\begin{equation}
g(\phi X,\phi Y)=-g(X,Y)+\eta (X)\eta (Y).  \label{1}
\end{equation}

For such manifold, we have 
\begin{equation}
\eta (X)=g(X,\xi ),\text{ }\phi (\xi )=0,\text{ }\eta \circ \phi =0.
\label{2}
\end{equation}

Moreover, we can define a skew-symmetric tensor field (a $2$-form) $\Phi $ by%
\begin{equation}
\Phi (X,Y)=g(X,\phi Y),  \label{3}
\end{equation}%
usually called \textit{fundamental form}.

For an almost paracontact manifold, there exists an orthogonal basis $%
\{X_{1},\ldots ,X_{n},Y_{1},\ldots ,$ $Y_{n},\xi \}$ such that $%
g(X_{i},X_{j})=\delta _{ij}$, $g(Y_{i},Y_{j})=-\delta _{ij}$ and $Y_{i}=\phi
X_{i}$, for any $i,j\in \left\{ 1,\ldots ,n\right\} $. Such basis is called
a $\phi $\textit{-basis}.

On an almost paracontact manifold, one defines the $(1,2)$-tensor field $%
N^{(1)}$ by%
\begin{equation}
N^{(1)}(X,Y)=\left[ \phi ,\phi \right] (X,Y)-2d\eta (X,Y)\xi ,  \label{nijen}
\end{equation}%
where $\left[ \phi ,\phi \right] $ is the \textit{Nijenhuis torsion} of $%
\phi $%
\begin{equation*}
\left[ \phi ,\phi \right] (X,Y)=\phi ^{2}\left[ X,Y\right] +\left[ \phi
X,\phi Y\right] -\phi \left[ \phi X,Y\right] -\phi \left[ X,\phi Y\right] .
\end{equation*}

If $N^{(1)}$ vanishes identically, then the almost paracontact manifold
(structure) is said to be \textit{normal} \cite{Za}. The normality condition
says that the almost paracomplex structure $J$ defined on $M\times 
\mathbb{R}
$%
\begin{equation*}
J(X,\lambda \frac{d}{dt})=(\phi X+\lambda \xi ,\eta (X)\frac{d}{dt}),
\end{equation*}%
is integrable.

If $d\eta (X,Y)=$ $g(X,\phi Y)=\Phi (X,Y)$, then $(M,\phi ,\xi ,\eta ,g)$ is
said to be \emph{paracontact metric manifold.} In a paracontact metric
manifold one defines a symmetric, trace-free operator $h=\frac{1}{2}{%
\mathcal{L}}_{\xi }\phi $, where $\mathcal{L}_{\xi }$, denotes the Lie
derivative. It is known \cite{Za} that $h$ anti-commutes with $\phi $ and
satisfies $h\xi =0,$ tr$h=$tr$h\phi =0$ and $\nabla \xi =-\phi +\phi h,$
where $\nabla $ is the Levi-Civita connection of the pseudo-Riemannian
manifold $(M,g)$.

Moreover $h=0$ if and only if $\xi $ is Killing vector field. In this case $%
(M,\phi ,\xi ,\eta ,g)$ is said to be a \emph{K-paracontact manifold}.
Similarly as in the class of almost contact metric manifolds \cite{blair2}$,$
a normal almost paracontact metric manifold will be called \textit{%
para-Sasakian }if $\Phi =d\eta $ \cite{erdem}.

\begin{definition}
\label{mert}An almost paracontact metric manifold $(M^{2n+1},\phi ,\xi ,\eta
,g)$ is called \textit{quasi-para-Sasakian if the structure is normal and
its fundamental }$2$-form $\Phi $ is closed.
\end{definition}

The class of para-Sasakian manifolds is contained in the class of
quasi-para-Sasakian manifolds. The converse does not hold in general. Also
in this context the para-Sasakian condition implies the $K$-paracontact
condition and the converse holds only in dimension $3$. A paracontact metric
manifold will be called\textit{\ paracosymplectic} if $d\Phi =0,$ $d\eta =0$ 
\cite{DACKO}, also, the class of paracosymplectic manifolds is contained in
the class of quasi-para-Sasakian manifolds.

\section{Basic Structure and Curvature Identities}

\begin{definition}
\label{AX} For a \textit{quasi-para-Sasakian manifold} $(M^{2n+1},\phi ,\xi
,\eta ,g)$, define the $(1,1)$ tensor field $\mathcal{A}$ by%
\begin{equation}
\mathcal{A}X=\nabla _{X}\xi .  \label{4}
\end{equation}

\begin{remark}
For the easy readability of the identities, we will use $g(\mathcal{A}%
X,Y)=(\nabla _{X}\eta )Y$.
\end{remark}
\end{definition}

Since the proof of the following Lemma is quite similar to Lemma 4.1 of \cite%
{BL1}, we don't give the proof of it.

\begin{lemma}
\label{properties1} Vector field $\xi $ of a \textit{quasi-para-Sasakian
structure} $(\phi ,\xi ,\eta ,g)$ is a Killing vector field.%
\begin{equation}
g(\mathcal{A}X,Y)+g(X,\mathcal{A}Y)=0.  \label{p1}
\end{equation}
\end{lemma}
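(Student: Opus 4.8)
The plan is to show that $\xi$ is Killing, equivalently that the operator $\mathcal{A}=\nabla\xi$ is skew-symmetric with respect to $g$, i.e. that $g(\mathcal{A}X,Y)+g(X,\mathcal{A}Y)=0$. Since the paper states this is proved like Lemma 4.1 of Blair's quasi-Sasakian paper, the natural route is to exploit the two defining hypotheses of a quasi-para-Sasakian structure — normality ($N^{(1)}=0$) and the closedness of the fundamental form ($d\Phi=0$) — together with the standard para-Kähler-type algebra supplied by \eqref{1}--\eqref{3}.

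First I would translate normality into a usable covariant identity. The vanishing of $N^{(1)}$ in \eqref{nijen} couples the Nijenhuis torsion of $\phi$ with $d\eta$; rewriting $[\phi,\phi]$ through the Levi-Civita connection (replacing Lie brackets by $\nabla$, which is legitimate since $\nabla$ is torsion-free) yields a relation among $(\nabla_X\phi)Y$, $\phi$, and the covariant derivative $\nabla\xi=\mathcal{A}$. Separately, I would expand the condition $d\Phi=0$ using $\Phi(X,Y)=g(X,\phi Y)$ and the formula $d\Phi(X,Y,Z)=\mathfrak{S}_{X,Y,Z}\,(\nabla_X\Phi)(Y,Z)$, where $\mathfrak{S}$ denotes the cyclic sum; this gives a cyclic symmetry on $g(Y,(\nabla_X\phi)Z)$. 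The aim of combining these is to isolate $g(\nabla_X\xi,Y)=(\nabla_X\eta)Y$ and show it is antisymmetric in $X,Y$.

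The cleanest mechanism is as follows. Differentiating $\eta(\xi)=1$ and using $\eta(X)=g(X,\xi)$ gives $g(\nabla_X\xi,\xi)=0$, so $\mathcal{A}\xi$ is orthogonal to $\xi$; in fact one checks $\mathcal{A}\xi=0$ directly, handling the $\xi$-direction. For the generic directions, I would feed the normality relation into $d\Phi(X,Y,\xi)$ and $d\Phi(\phi X,\phi Y,\xi)$, using $\eta\circ\phi=0$ and $\phi\xi=0$ from \eqref{2}, to convert the closed-form condition into a statement purely about the symmetric and antisymmetric parts of $g(\mathcal{A}X,Y)$. The normality condition forces the symmetric part to vanish, leaving exactly \eqref{p1}.

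The hardest step will be the bookkeeping in the normality expansion: converting $N^{(1)}=0$ into a connection identity and then correctly pairing it against the $d\Phi=0$ relation without sign errors, since the pseudo-Riemannian signature $(n+1,n)$ and the minus sign in \eqref{1} make several cancellations sign-sensitive. In particular, the para-case relation $g(\phi X,\phi Y)=-g(X,Y)+\eta(X)\eta(Y)$ differs from the contact case by a sign, so one must verify that the antisymmetrization survives rather than collapsing; I expect this to be where the para-analogue genuinely diverges from Blair's computation and where most of the care is needed. Once the symmetric part of $\mathcal{A}$ is shown to vanish, \eqref{p1} and the Killing property of $\xi$ follow immediately.
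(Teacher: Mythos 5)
Your proposal is viable, but first note what you are comparing against: the paper does not prove this lemma at all, it defers to Lemma 4.1 of \cite{BL1}, whose argument is Lie-derivative based --- the Cartan magic formula gives $\mathcal{L}_{\xi }\Phi =0$ (exactly the step the author carries out explicitly in the proof of Proposition \ref{properties2}), normality gives $\mathcal{L}_{\xi }\phi =0$ and $\mathcal{L}_{\xi }\eta =0$, and then $(\mathcal{L}_{\xi }g)(X,\phi Y)=(\mathcal{L}_{\xi }\Phi )(X,Y)-g(X,(\mathcal{L}_{\xi }\phi )Y)=0$ together with $(\mathcal{L}_{\xi }g)(X,\xi )=(\mathcal{L}_{\xi }\eta )(X)=0$ yields $\mathcal{L}_{\xi }g=0$. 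Your covariant plan is this same argument unpacked through the Levi-Civita connection, and it does close: writing $A(X,Y)=g(\mathcal{A}X,Y)$, the only contraction of $d\Phi =0$ you need is the one with $\xi $,
\begin{equation*}
0=d\Phi (X,Y,\xi )\ \Longrightarrow \ A(X,\phi Y)-A(Y,\phi X)+g(X,(\nabla _{\xi }\phi )Y)=0,
\end{equation*}
and the only piece of normality you need is $\mathcal{L}_{\xi }\phi =0$, equivalently $(\nabla _{\xi }\phi )Y=\mathcal{A}\phi Y-\phi \mathcal{A}Y$; substituting this and using $g(\phi X,Y)=-g(X,\phi Y)$ collapses the display to $A(X,\phi Y)+A(\phi Y,X)=0$, which is \eqref{p1} in all directions lying in $\ker \eta $, and the remaining $\xi $-direction is settled by $\mathcal{A}\xi =0$. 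Two corrections to your sketch are in order. First, $\mathcal{A}\xi =0$ is not ``direct'': differentiating $g(\xi ,\xi )=1$ only gives $g(\mathcal{A}X,\xi )=0$, i.e.\ the image of $\mathcal{A}$ is orthogonal to $\xi $; the vanishing of $A(\xi ,X)=g(\nabla _{\xi }\xi ,X)$ requires $\mathcal{L}_{\xi }\eta =0$ (equivalently $d\eta (\xi ,\cdot )=0$), which is again a consequence of normality, not of metric compatibility. Second, you do not need the full covariant expansion of $N^{(1)}$ from \eqref{nijen}, nor the extra contraction $d\Phi (\phi X,\phi Y,\xi )$ --- only the two $\xi $-traces of normality ($\mathcal{L}_{\xi }\phi =0$ and $\mathcal{L}_{\xi }\eta =0$) ever enter --- and the signature subtlety you flag never materializes, since the computation uses only $\phi ^{2}=I-\eta \otimes \xi $ and skew-adjointness of $\phi $, which read the same as in Blair's Riemannian setting. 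As for what each route buys: the Lie-derivative route is shorter because it avoids all connection bookkeeping, while your covariant route, once carried out, produces as byproducts the pointwise identities the paper derives afterwards (essentially \eqref{P2} and the mechanism behind \eqref{P5}).
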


\begin{proposition}
\label{properties2}For a \textit{quasi-para-Sasakian manifold }$%
(M^{2n+1},\phi ,\xi ,\eta ,g),$ we have 
\begin{equation}
(\nabla _{X}\phi )Y=-g(\mathcal{A}X,\phi Y)\xi -\eta (Y)\phi \mathcal{A}X,
\label{P5}
\end{equation}%
\begin{equation}
\nabla _{\xi }\phi =0,~~~\nabla _{\xi }\xi =0,~~~\nabla _{\xi }\eta =0,
\label{P6}
\end{equation}%
\begin{equation}
\mathcal{A}\phi X=\phi \mathcal{A}X,  \label{P2}
\end{equation}%
\begin{equation}
g(\mathcal{A}\phi X,\phi Y)=-g(\mathcal{A}X,Y),  \label{P3}
\end{equation}%
\begin{equation}
g(\mathcal{A}\phi X,Y)=-g(\mathcal{A}X,\phi Y),  \label{P4}
\end{equation}%
where $X$, $Y$ are arbitrary vector fields on $M^{2n+1}$.
\end{proposition}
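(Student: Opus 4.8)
The plan is to take the two defining conditions — normality ($N^{(1)}=0$) and $d\Phi=0$ — as the only structural inputs, together with the fact (Lemma \ref{properties1}) that $\xi$ is Killing, and to deduce all five displayed identities from them. First I would record the easy consequences of $\xi$ being Killing. Since $\mathcal{A}=\nabla\xi$ is skew by \eqref{p1}, pairing with $\xi$ gives $g(\mathcal{A}X,\xi)=g(\nabla_X\xi,\xi)=\tfrac12 X(g(\xi,\xi))=0$, and skew-symmetry then forces $g(X,\mathcal{A}\xi)=0$ for all $X$, so $\mathcal{A}\xi=\nabla_\xi\xi=0$ and hence $(\nabla_\xi\eta)X=g(\mathcal{A}\xi,X)=0$. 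I would also note the purely structural identity $(\nabla_X\phi)\xi=\nabla_X(\phi\xi)-\phi\nabla_X\xi=-\phi\mathcal{A}X$ (from $\phi\xi=0$), the fact that $\Phi$ being a $2$-form makes $\phi$ skew-adjoint, $g(\phi X,Y)=-g(X,\phi Y)$, and the translation of $d\eta$ into $\mathcal{A}$: by \eqref{p1}, $d\eta(X,Y)=\tfrac12\{(\nabla_X\eta)Y-(\nabla_Y\eta)X\}=g(\mathcal{A}X,Y)$.

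The core step is \eqref{P5}. Here I would write $N^{(1)}=0$ in its covariant-derivative form, which for the Levi-Civita connection reads
\[
(\nabla_{\phi X}\phi)Y-(\nabla_{\phi Y}\phi)X+\phi(\nabla_Y\phi)X-\phi(\nabla_X\phi)Y=2g(\mathcal{A}X,Y)\xi,
\]
and write $d\Phi=0$ as the cyclic identity $g(Y,(\nabla_X\phi)Z)+g(Z,(\nabla_Y\phi)X)+g(X,(\nabla_Z\phi)Y)=0$, using $(\nabla_X\Phi)(Y,Z)=g(Y,(\nabla_X\phi)Z)$. Solving this linear system for $(\nabla_X\phi)Y$ — equivalently, substituting $N^{(1)}=0$ and $d\Phi=0$ into the general expression for $2g((\nabla_X\phi)Y,Z)$ valid on any almost paracontact metric manifold — leaves only the terms carrying $d\eta$, and rewriting those through $d\eta=g(\mathcal{A}\cdot,\cdot)$ yields exactly \eqref{P5}. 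I expect this to be the main obstacle: the $+$ sign in $\phi^2=\mathrm{Id}-\eta\otimes\xi$ and the $(n+1,n)$ signature flip several signs relative to the almost contact case, so the bookkeeping in reducing the general formula (and in confirming that all contributions not involving $d\eta$ drop out) must be carried out carefully.

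Everything else then follows algebraically from \eqref{P5}. For \eqref{P6}, setting $X=\xi$ in \eqref{P5} and using $\mathcal{A}\xi=0$ gives $(\nabla_\xi\phi)Y=0$; together with the already-established $\nabla_\xi\xi=0$ and $\nabla_\xi\eta=0$ this is \eqref{P6}. For \eqref{P2}, I would feed \eqref{P5} into the covariant normality identity with $Y=\xi$; using $(\nabla_{\phi X}\phi)\xi=-\phi\mathcal{A}\phi X$, $\phi\xi=0$, the just-proved $\nabla_\xi\phi=0$, and $(\nabla_X\phi)\xi=-\phi\mathcal{A}X$, the surviving terms collapse to $\mathcal{A}X=\phi\mathcal{A}\phi X$, and applying $\phi$ together with $\eta(\mathcal{A}\phi X)=g(\mathcal{A}\phi X,\xi)=0$ yields $\mathcal{A}\phi X=\phi\mathcal{A}X$ (the case $X=\xi$ being trivial since $\phi\xi=0$ and $\mathcal{A}\xi=0$). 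Finally \eqref{P3} and \eqref{P4} are immediate from \eqref{P2}: since $\phi$ is skew-adjoint, $\eta(\mathcal{A}X)=0$, and $g(\phi U,\phi V)=-g(U,V)+\eta(U)\eta(V)$ by \eqref{1},
\[
g(\mathcal{A}\phi X,\phi Y)=g(\phi\mathcal{A}X,\phi Y)=-g(\mathcal{A}X,Y),\qquad g(\mathcal{A}\phi X,Y)=g(\phi\mathcal{A}X,Y)=-g(\mathcal{A}X,\phi Y),
\]
which are \eqref{P3} and \eqref{P4}.
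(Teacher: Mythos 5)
Your proposal is correct, and for the central identity \eqref{P5} it follows essentially the same route as the paper: the paper simply cites Proposition 2.4 of Zamkovoy (the general formula expressing $2g((\nabla _{X}\phi )Y,Z)$ through $d\Phi $, $N^{(1)}$ and $d\eta $ on any almost paracontact metric manifold) and substitutes normality, $d\Phi =0$ and $d\eta (X,Y)=g(\mathcal{A}X,Y)$, which is exactly the reduction you describe --- your ``linear system'' of the covariant normality identity and the cyclic $d\Phi $ identity amounts to re-deriving that formula rather than citing it. Where you genuinely diverge is \eqref{P2}: the paper first applies Cartan's magic formula to get $\mathcal{L}_{\xi }\Phi =d(i_{\xi }\Phi )+i_{\xi }(d\Phi )=0$ and then expands this Lie derivative (using \eqref{P6}) to read off $g(\phi \mathcal{A}Y-\mathcal{A}\phi Y,X)=0$; you instead evaluate the covariant form of $N^{(1)}=0$ at $Y=\xi $, obtaining $\mathcal{A}X=\phi \mathcal{A}\phi X$ and hence \eqref{P2} after applying $\phi $ and using $\eta (\mathcal{A}\phi X)=0$ --- a step I checked and which is sound, since $(\nabla _{\phi \xi }\phi )X=0$, $\nabla _{\xi }\phi =0$ and $(\nabla _{X}\phi )\xi =-\phi \mathcal{A}X$ kill all the other terms. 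Both derivations are valid and of comparable length; the paper's is a one-liner once $\mathcal{L}_{\xi }\Phi =0$ is in hand, while yours stays entirely within covariant-derivative identities and makes the dependence on normality explicit. You are also more careful than the paper about \eqref{P6}: the paper's text only gestures at it, whereas your derivation ($\mathcal{A}\xi =0$ from the Killing property and skewness \eqref{p1}, then $X=\xi $ in \eqref{P5}) is the clean way to get it, and it is needed before \eqref{P2} in both arguments. The concluding deductions of \eqref{P3} and \eqref{P4} from \eqref{P2} coincide with the paper's.
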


\begin{proof}
Using the Cartan magic formula%
\begin{equation*}
\mathcal{L}_{\xi }\Phi =d(i_{\xi }\Phi )+i_{\xi }(d\Phi ),
\end{equation*}%
we find $\mathcal{L}_{\xi }\Phi =0$, since $d\Phi =0$ and $(i_{\xi }\Phi
)X=\Phi (\xi ,X)=g(\xi ,\phi X)=0$, where $\mathcal{L}$ indicates the
operator of the Lie differentiation. If we use the definition of
quasi-para-Sasakian manifold, (\ref{p1}) and the well known equation $2d\eta
(X,Y)=X(\eta (Y))-Y(\eta (X))-\eta ([X,Y])$ in Proposition 2.4 of \cite{Za},
we obtain (\ref{P5}).

$\mathcal{L}_{\xi }\Phi =0,$ properties of $\phi $ and eq. (\ref{P6}) follow 
\begin{eqnarray*}
(\mathcal{L}_{\xi }\Phi )(X,Y) &=&\mathcal{L}_{\xi }\Phi (X,Y)-\Phi (%
\mathcal{L}_{\xi }X,Y)-\Phi (X,\mathcal{L}_{\xi }Y) \\
0 &=&g(\phi \mathcal{A}Y-\mathcal{A}\phi Y,X).
\end{eqnarray*}%
So we obtain (\ref{P2}). In virtue of (\ref{P2}), we obtain (\ref{P3}) and (%
\ref{P4}).
\end{proof}

\begin{lemma}
\label{2-form}For a \textit{quasi-para-Sasakian} manifold $(M^{2n+1},\phi
,\xi ,\eta ,g)$ with its curvature transformation $R_{XY}=[\nabla
_{X},\nabla _{Y}]-\nabla _{\lbrack X,Y]}$, the following equations hold%
\begin{equation}
R(\xi ,X)Y=-(\nabla _{X}\mathcal{A)}Y,  \label{R1}
\end{equation}%
\begin{equation}
g(R(\xi ,X)Y,\xi )=g(\mathcal{A}X,\mathcal{A}Y),  \label{R 1.1}
\end{equation}%
\begin{equation}
g(R(\xi ,X)\phi Y,\phi Z)+g(R(\xi ,X)Y,Z)=g(\mathcal{A}X,\mathcal{A}Y)\eta
(Z)-g(\mathcal{A}X,\mathcal{A}Z)\eta (Y),  \label{R1.2}
\end{equation}%
\begin{equation}
S(\xi ,\xi )=-tr\mathcal{A}^{2}.  \label{R1.3}
\end{equation}
\end{lemma}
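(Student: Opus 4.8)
The plan is to deduce all four identities from the single structural fact that $\xi$ is Killing (Lemma \ref{properties1}), so that $\mathcal{A}=\nabla \xi$ is skew-symmetric by (\ref{p1}); I would prove (\ref{R1}) first and then read off the other three by contraction.

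For (\ref{R1}), I would begin from $\mathcal{A}X=\nabla_{X}\xi$ and the stated curvature convention to record, for arbitrary vector fields, the purely formal identity $R(X,Y)\xi =(\nabla_{X}\mathcal{A})Y-(\nabla_{Y}\mathcal{A})X$, valid for any vector field. Differentiating (\ref{p1}) shows that each $\nabla_{X}\mathcal{A}$ is again skew-symmetric. I would then feed this skew-symmetry, together with the first Bianchi identity and the pair symmetry $g(R(A,B)C,D)=g(R(C,D)A,B)$, into the three cyclic permutations of the tensor $(X,Y,Z)\mapsto g((\nabla_{X}\mathcal{A})Y,Z)$; solving the resulting linear system pins down its symmetric part and yields $R(\xi ,X)Y=-(\nabla_{X}\mathcal{A})Y$. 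This is exactly the standard second-order identity for a Killing field, written in the present sign convention.

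Identity (\ref{R 1.1}) should then follow by contracting (\ref{R1}) with $\xi$: since $g(\xi ,\xi )=\eta (\xi )=1$ is constant, we get $g(\mathcal{A}W,\xi )=0$ for every $W$, and one use of metric compatibility gives $g((\nabla_{X}\mathcal{A})Y,\xi )=-g(\mathcal{A}X,\mathcal{A}Y)$, i.e. $g(R(\xi ,X)Y,\xi )=g(\mathcal{A}X,\mathcal{A}Y)$. For (\ref{R1.3}) I would compute the Ricci curvature as a trace over a pseudo-orthonormal $\phi$-basis $\{e_{i}\}$ with signs $\varepsilon_{i}=g(e_{i},e_{i})$; the curvature symmetries rewrite each $g(R(e_{i},\xi )\xi ,e_{i})$ through (\ref{R 1.1}) as $g(\mathcal{A}e_{i},\mathcal{A}e_{i})$, so that $S(\xi ,\xi )=\sum_{i}\varepsilon_{i}g(\mathcal{A}e_{i},\mathcal{A}e_{i})$, and the skew-symmetry of $\mathcal{A}$ converts this into $-\sum_{i}\varepsilon_{i}g(\mathcal{A}^{2}e_{i},e_{i})=-tr\mathcal{A}^{2}$.

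The heart of the argument, and the step I expect to be the main obstacle, is (\ref{R1.2}), which hinges on how $\nabla \mathcal{A}$ interacts with $\phi$. I would first establish the commutation formula $(\nabla_{X}\mathcal{A})\phi Y-\phi (\nabla_{X}\mathcal{A})Y=(\nabla_{X}\phi )\mathcal{A}Y-\mathcal{A}(\nabla_{X}\phi )Y$, in which the terms not involving $\nabla_{X}\phi$ cancel precisely because $\mathcal{A}\phi =\phi \mathcal{A}$ by (\ref{P2}). Substituting (\ref{P5}) and (\ref{P6}) into the right-hand side, and using $\eta (\mathcal{A}W)=\eta (\mathcal{A}^{2}W)=0$ together with the relation (\ref{1}) for $g(\phi \cdot ,\phi \cdot )$, I would reduce $g(R(\xi ,X)\phi Y,\phi Z)$ to $g((\nabla_{X}\mathcal{A})Y,Z)+g(\mathcal{A}X,\mathcal{A}Y)\eta (Z)-g(\mathcal{A}X,\mathcal{A}Z)\eta (Y)$. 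Adding $g(R(\xi ,X)Y,Z)=-g((\nabla_{X}\mathcal{A})Y,Z)$ then cancels the first term and leaves exactly the right-hand side of (\ref{R1.2}). The only delicate point is the bookkeeping of the $\xi$- and $\phi$-components; once (\ref{R1}) and the structure equations of Proposition \ref{properties2} are in hand, everything reduces to direct substitution.
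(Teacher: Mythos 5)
Your proposal is correct and takes essentially the same route as the paper: your commutation formula $(\nabla_{X}\mathcal{A})\phi Y-\phi(\nabla_{X}\mathcal{A})Y=(\nabla_{X}\phi)\mathcal{A}Y-\mathcal{A}(\nabla_{X}\phi)Y$, evaluated via (\ref{P5}), (\ref{P6}) and $\eta\circ\mathcal{A}=0$, is exactly the paper's intermediate identities (\ref{A})--(\ref{B}) assembled into (\ref{C}), after which both arguments contract with $\phi Z$ using (\ref{1}) and (\ref{R 1.1}), and both obtain (\ref{R 1.1}) and (\ref{R1.3}) by the same contractions of (\ref{R1}). The only difference is cosmetic: you spell out the standard Killing-field (Kostant-type) cyclic argument for (\ref{R1}), which the paper simply asserts as an easy consequence of $\xi$ being Killing and (\ref{p1}).
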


\begin{proof}
Using the fact that $\xi $ is Killing vector field and equation (\ref{p1}),
one can easily get (\ref{R1}). If we take the inner product of (\ref{R1})
with $\xi $ and then use (\ref{p1}), we have (\ref{R 1.1}). Using (\ref{P5}%
), we get%
\begin{eqnarray}
\mathcal{A}\nabla _{X}\phi Y &=&-\eta (Y)\phi \mathcal{A}^{2}X+\phi \mathcal{%
A}\nabla _{X}Y,  \label{A} \\
\mathcal{A(}\nabla _{X}\phi )Y &=&-\eta (Y)\phi \mathcal{A}^{2}X.  \notag
\end{eqnarray}%
From (\ref{P2}) and (\ref{P5}), we have%
\begin{equation}
\phi \nabla _{X}\mathcal{A}Y-\nabla _{X}\mathcal{A}\phi Y=-(\nabla _{X}\phi )%
\mathcal{A}Y=g(\mathcal{A}X,\phi \mathcal{A}Y)\xi .  \label{B}
\end{equation}%
Taking into account (\ref{R1}), (\ref{A}) and (\ref{B}), we obtain%
\begin{equation}
R(\xi ,X)\phi Y-\phi R(\xi ,X)Y=-\eta (Y)\phi \mathcal{A}^{2}X+g(\mathcal{A}%
X,\phi \mathcal{A}Y)\xi .  \label{C}
\end{equation}%
On the other hand, if we take the inner product of (\ref{C}) with $\phi Z$
and use (\ref{p1}) and (\ref{R 1.1}), we get (\ref{R1.2}). The proof of (\ref%
{R1.3}) is a direct consequence of (\ref{R 1.1}).
\end{proof}

\begin{proposition}
\label{also have}For a \textit{quasi-para-Sasakian} manifold $(M^{2n+1},\phi
,\xi ,\eta ,g)$, we also have%
\begin{eqnarray}
g(R(X,Y)\phi Z,\phi W)+g(R(X,Y)Z,W) &=&\eta (W)g(R(X,Y)Z,\xi )+\eta
(Z)g(R(X,Y)\xi ,W)  \notag \\
&&-g(\mathcal{A}X,\phi W)g(\mathcal{A}Y,\phi Z)+g(\mathcal{A}X,\phi Z)g(%
\mathcal{A}Y,\phi W)  \notag \\
&&+g(\mathcal{A}X,Z)g(\mathcal{A}Y,W)-g(\mathcal{A}X,W)g(\mathcal{A}Y,Z).
\label{RXYY}
\end{eqnarray}
\end{proposition}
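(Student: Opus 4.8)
The plan is to isolate the $\phi$-invariant part of the curvature and reduce the whole identity to the Ricci commutation formula for the structure tensor $\phi$. First I would rewrite the left-hand side using the compatibility relation (\ref{1}). Applying it to the pair $(R(X,Y)Z,W)$ gives
\begin{equation*}
g(R(X,Y)Z,W) = -g(\phi R(X,Y)Z,\phi W) + \eta(R(X,Y)Z)\,\eta(W),
\end{equation*}
so that the left-hand side collapses to
\begin{equation*}
g\big((R(X,Y)\phi - \phi R(X,Y))Z,\ \phi W\big) + \eta(W)\,g(R(X,Y)Z,\xi).
\end{equation*}
Since $\eta(R(X,Y)Z)=g(R(X,Y)Z,\xi)$, the second term already reproduces the first term on the right-hand side, and the task is reduced to evaluating $g\big(R(X,Y)\phi Z-\phi R(X,Y)Z,\ \phi W\big)$.

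Next I would recognize $R(X,Y)\phi Z-\phi R(X,Y)Z$ as the curvature operator acting on the $(1,1)$-tensor $\phi$, and invoke the Ricci identity
\begin{equation*}
R(X,Y)\phi Z - \phi R(X,Y)Z = (\nabla^2_{X,Y}\phi)Z - (\nabla^2_{Y,X}\phi)Z,
\end{equation*}
where $\nabla^2_{X,Y}=\nabla_X\nabla_Y-\nabla_{\nabla_X Y}$. The core computation is then to differentiate the first-order identity (\ref{P5}) once more. Writing $(\nabla_Y\phi)Z=-g(\mathcal{A}Y,\phi Z)\xi-\eta(Z)\phi\mathcal{A}Y$ and applying $\nabla_X$, I would systematically use $\nabla_X\xi=\mathcal{A}X$, $(\nabla_X\eta)Z=g(\mathcal{A}X,Z)$, and (\ref{P5}) again wherever a derivative of $\phi$ appears, to produce a closed expression for $(\nabla^2_{X,Y}\phi)Z$ in terms of $\mathcal{A}$ and $\nabla\mathcal{A}$.

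Then I would antisymmetrize in $X$ and $Y$ and pair with $\phi W$. Here the skew-symmetry (\ref{p1}) of $\mathcal{A}$, the commutation (\ref{P2}) $\mathcal{A}\phi=\phi\mathcal{A}$, and the relations (\ref{P3})--(\ref{P4}) let most terms either cancel or combine, after reducing via (\ref{1})--(\ref{2}), into the four quadratic expressions $g(\mathcal{A}X,\phi W)g(\mathcal{A}Y,\phi Z)$, $g(\mathcal{A}X,\phi Z)g(\mathcal{A}Y,\phi W)$, $g(\mathcal{A}X,Z)g(\mathcal{A}Y,W)$, and $g(\mathcal{A}X,W)g(\mathcal{A}Y,Z)$. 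The terms carrying the explicit factor $\eta(Z)$ are the delicate ones: the second derivatives of $\mathcal{A}$ occurring there reassemble, upon antisymmetrization, into $(\nabla_X\mathcal{A})Y-(\nabla_Y\mathcal{A})X=R(X,Y)\xi$; pairing with $\phi W$ and using (\ref{1}) together with $g(R(X,Y)\xi,\xi)=0$ yields exactly the remaining right-hand side term $\eta(Z)\,g(R(X,Y)\xi,W)$.

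I expect the main obstacle to be the bookkeeping in this antisymmetrization step---keeping track of which $\nabla\mathcal{A}$ terms survive and correctly collapsing the $\eta(Z)$-terms into $R(X,Y)\xi$ by means of (\ref{R1})---rather than any conceptual difficulty, since every tool required is already recorded in Proposition \ref{properties2} and Lemma \ref{2-form}.
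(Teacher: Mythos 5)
Your proposal is correct and follows essentially the same route as the paper's proof: reduce the left-hand side via (\ref{1}) to $g((R(X,Y)\phi)Z,\phi W)+\eta(W)g(R(X,Y)Z,\xi)$, compute $(R(X,Y)\phi)Z$ by covariantly differentiating (\ref{P5}) a second time, collapse the antisymmetrized $\nabla\mathcal{A}$ terms into $R(X,Y)\xi$, and contract with $\phi W$ using $g(R(X,Y)\xi,\xi)=0$. The only cosmetic difference is that you use the tensorial second derivative $\nabla^{2}_{X,Y}$ throughout, whereas the paper works at a fixed point with $(\nabla X)_{P}=(\nabla Y)_{P}=(\nabla Z)_{P}=0$; these are equivalent devices for the same computation.
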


\begin{proof}
The following formula is valid%
\begin{equation*}
(\nabla _{X}\nabla _{Y}\phi )Z=\nabla _{X}(\nabla _{Y}\phi )Z-(\nabla
_{_{\nabla _{X}Y}}\phi )Z-(\nabla _{Y}\phi )\nabla _{X}Z.
\end{equation*}%
Now we suppose that $P$ is a fixed point of $(M^{2n+1},\phi ,\xi ,\eta ,g)$
and $X,Y,Z$ are vector fields such that $(\nabla X)_{P}=$ $(\nabla
Y)_{P}=(\nabla Z)_{P}=0$ Hence the last identity at the point $P$, reduces
to the form 
\begin{equation}
(\nabla _{X}\nabla _{Y}\phi )Z=\nabla _{X}(\nabla _{Y}\phi )Z-(\nabla
_{Y}\phi )\nabla _{X}Z.  \label{rxy1}
\end{equation}%
Now, after differentiating (\ref{P5})\textit{\ }covariantly and using (\ref%
{rxy1}),\ we find%
\begin{equation*}
(\nabla _{X}\nabla _{Y}\phi )Z=-g(\nabla _{X}\mathcal{A}Y,\phi Z)\xi -g(%
\mathcal{A}Y,\phi Z)\mathcal{A}X-g(\mathcal{A}X,Z)\phi \mathcal{A}Y-\eta
(Z)\phi \nabla _{X}\mathcal{A}Y.
\end{equation*}%
On the other hand, combining the last equation and (\ref{P5}), we obtain%
\begin{eqnarray}
(R(X,Y)\phi )Z &=&(\nabla _{X}\nabla _{Y}\phi )Z-(\nabla _{Y}\nabla _{X}\phi
)Z-(\nabla _{\left[ X,Y\right] }\phi )Z  \notag \\
&=&-g(R(X,Y)\xi ,\phi Z)\xi -\eta (Z)\phi R(X,Y)\xi  \notag \\
&&-g(\mathcal{A}Y,\phi Z)\mathcal{A}X+g(\mathcal{A}X,\phi Z)\mathcal{A}Y 
\notag \\
&&-g(\mathcal{A}X,Z)\phi \mathcal{A}Y+g(\mathcal{A}Y,Z)\phi \mathcal{A}X.
\label{rxy2}
\end{eqnarray}%
Taking into account (\ref{1}), we deduce%
\begin{equation*}
g(R(X,Y)\phi Z,\phi W)+g(R(X,Y)Z,W)=g(R(X,Y)Z,\xi )\eta (W)+g((R(X,Y)\phi
)Z,\phi W).
\end{equation*}%
Taking the inner product of (\ref{rxy2}) with $\phi W$, and using the above
equation, we get (\ref{RXYY}).
\end{proof}

\begin{proposition}
\label{b30}A \textit{quasi-para-Sasakian} manifold $(M^{2n+1},\phi ,\xi
,\eta ,g)~$satisfies followings%
\begin{eqnarray}
S^{\ast }(Y,Z)+S(Y,Z) &=&S(Y,\xi )\eta (Z)+g(\mathcal{A}Y,\phi Z)trace(\phi 
\mathcal{A})  \label{S1} \\
&&-g(\mathcal{A}Y,\mathcal{A}Z),  \notag \\
r^{\ast }+r &=&-tr^{2}(\phi \mathcal{A}),  \label{S2}
\end{eqnarray}%
where $S^{\ast }(X,Y)=\dsum\limits_{i=1}^{2n+1}\varepsilon
_{i}g(R(e_{i},X)\phi Y,\phi e_{i})$ denotes the *-Ricci curvature tensor and 
$r^{\ast }=\dsum\limits_{i=1}^{2n+1}\varepsilon _{i}S^{\ast }(e_{i},e_{i})$
denotes the *-scalar curvature of the $(M,\phi ,\xi ,\eta ,g),$ where $%
\left\{ e_{i}\right\} ,i\in \left\{ 1,...,2n+1\right\} $ be a local $\phi $%
-basis.
\end{proposition}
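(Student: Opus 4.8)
The master identity (\ref{RXYY}) from Proposition \ref{also have} already encodes all the curvature information we need, so the plan is to obtain (\ref{S1}) by a single contraction of (\ref{RXYY}) and then to deduce (\ref{S2}) by contracting (\ref{S1}) once more. Concretely, in (\ref{RXYY}) I put $X=W=e_{i}$, multiply through by $\varepsilon_{i}$, and sum over $i$ against a local $\phi$-basis $\{e_{i}\}$. The two terms on the left-hand side then read $\sum_{i}\varepsilon_{i}g(R(e_{i},Y)\phi Z,\phi e_{i})+\sum_{i}\varepsilon_{i}g(R(e_{i},Y)Z,e_{i})$, which are exactly $S^{\ast}(Y,Z)+S(Y,Z)$ by the definitions in the statement.

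It then remains to reduce the six terms on the right. The tools are: the completeness relations $\sum_{i}\varepsilon_{i}g(U,e_{i})e_{i}=U$ and $\sum_{i}\varepsilon_{i}\eta(e_{i})e_{i}=\xi$; the skew-adjointness of $\mathcal{A}$ recorded in (\ref{p1}), which gives $tr\mathcal{A}=0$ and lets me move $\mathcal{A}$ across the inner product; the skew-adjointness of $\phi$ coming from (\ref{1}) and (\ref{3}), namely $g(\phi X,Y)=-g(X,\phi Y)$, which yields $\sum_{i}\varepsilon_{i}g(\mathcal{A}e_{i},\phi e_{i})=-trace(\phi\mathcal{A})$; and identity (\ref{R 1.1}). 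Term by term: the $\eta(W)$-term becomes $g(R(\xi,Y)Z,\xi)=g(\mathcal{A}Y,\mathcal{A}Z)$ by (\ref{R 1.1}); the $\eta(Z)$-term becomes $S(Y,\xi)\eta(Z)$; the first quadratic term produces $trace(\phi\mathcal{A})\,g(\mathcal{A}Y,\phi Z)$; the term $g(\mathcal{A}e_{i},\phi Z)g(\mathcal{A}Y,\phi e_{i})$ contracts to $-g(\mathcal{A}Y,\phi\mathcal{A}\phi Z)$, which by (\ref{P2}) and $\eta(\mathcal{A}Z)=0$ (so that $\phi^{2}\mathcal{A}Z=\mathcal{A}Z$) collapses to $-g(\mathcal{A}Y,\mathcal{A}Z)$; the term $g(\mathcal{A}e_{i},Z)g(\mathcal{A}Y,e_{i})$ likewise gives $-g(\mathcal{A}Y,\mathcal{A}Z)$; and the last term vanishes since $tr\mathcal{A}=0$. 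Adding these, the single $+g(\mathcal{A}Y,\mathcal{A}Z)$ against two copies of $-g(\mathcal{A}Y,\mathcal{A}Z)$ leaves exactly $-g(\mathcal{A}Y,\mathcal{A}Z)$, and (\ref{S1}) drops out.

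For (\ref{S2}) I contract (\ref{S1}) once more, setting $Y=Z=e_{j}$, multiplying by $\varepsilon_{j}$ and summing. The left-hand side becomes $r^{\ast}+r$. On the right, $\sum_{j}\varepsilon_{j}S(e_{j},\xi)\eta(e_{j})=S(\xi,\xi)=-tr\mathcal{A}^{2}$ by (\ref{R1.3}); the middle term becomes $-tr^{2}(\phi\mathcal{A})$ using the trace computation above; and $-\sum_{j}\varepsilon_{j}g(\mathcal{A}e_{j},\mathcal{A}e_{j})=+tr\mathcal{A}^{2}$ again by the skew-adjointness of $\mathcal{A}$. The two $tr\mathcal{A}^{2}$ contributions cancel, leaving $r^{\ast}+r=-tr^{2}(\phi\mathcal{A})$. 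I expect the only genuine obstacle to be the sign and trace bookkeeping, in particular the simplification of the $\phi\mathcal{A}\phi$-type contraction through (\ref{P2}) together with $\eta\circ\mathcal{A}=0$, and keeping the skew-adjointness signs of $\mathcal{A}$ and $\phi$ consistent throughout the contractions.
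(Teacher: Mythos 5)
Your proposal is correct and follows essentially the same route as the paper: contracting (\ref{RXYY}) with $X=W=e_{i}$ over a $\phi$-basis to obtain (\ref{S1}), then contracting (\ref{S1}) and invoking (\ref{R1.3}) to get (\ref{S2}). The only cosmetic difference is that you simplify the $g(\mathcal{A}e_{i},\phi Z)g(\mathcal{A}Y,\phi e_{i})$ contraction via (\ref{P2}) and $\eta\circ\mathcal{A}=0$, whereas the paper does it through (\ref{p1}), (\ref{P2}) and (\ref{P4}) (its identity (\ref{M1})) --- the same computation in slightly different clothing.
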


\begin{proof}
One can show that $\dsum\limits_{i=1}^{2n+1}g(\mathcal{A}e_{i},e_{i})=0$.
Using (\ref{p1}), (\ref{P2}) and (\ref{P4}), we get%
\begin{eqnarray}
\dsum\limits_{i=1}^{2n+1}g(\mathcal{A}e_{i},\phi Z)g(\mathcal{A}Y,\phi
e_{i}) &=&\dsum\limits_{i=1}^{2n+1}g(\mathcal{A}\phi Y,e_{i})g(\mathcal{A}%
\phi Z,e_{i})  \notag \\
&=&g(\mathcal{A}\phi Y,\mathcal{A}\phi Z)=-g(\mathcal{A}Y,\mathcal{A}Z).
\label{M1}
\end{eqnarray}%
Using the fact that $tr(\phi \mathcal{A})=\dsum\limits_{i=1}^{2n+1}%
\varepsilon _{i}g(\phi \mathcal{A}e_{i},e_{i})=-\dsum\limits_{i=1}^{2n+1}g(%
\mathcal{A}e_{i},\phi e_{i})$, (\ref{R 1.1}) and (\ref{M1}) after replacing $%
X,W$ by $e_{i}~$in (\ref{RXYY}) and taking summation over $i$, we find (\ref%
{S1}). For the proof of (\ref{S2}), after replacing $Y,Z$ by $e_{i}~$in (\ref%
{S1}) and taking the summation over $i$, and using (\ref{R1.3}), we obtain
the requested equation.
\end{proof}

\section{$D$-homothetic deformations}

Let $(M^{2n+1},\phi ,\xi ,\eta ,g)$ be an almost paracontact metric manifold
and $(\phi ,\xi ,\eta ,g)$ is an almost paracontact metric structure on $%
(M^{2n+1},\phi ,\xi ,\eta ,g)$. Tensor fields $\tilde{\phi},\tilde{\xi},%
\tilde{\eta}$ and $\tilde{g}$ defined as%
\begin{equation}
\tilde{\phi}=\phi ,\text{ \ \ }\tilde{\xi}=\frac{1}{\alpha }\xi ,\text{ }%
\tilde{\eta}=\alpha \eta ,\text{ \ }\tilde{g}=\beta g+(\alpha ^{2}-\beta
)\eta \otimes \eta ,  \label{deformatio}
\end{equation}%
where $\alpha \neq 0$ and $\beta >0$.

Thus, $(\tilde{\phi},\tilde{\xi},\tilde{\eta},\tilde{g})$ is also an almost
paracontact metric structure on $(M^{2n+1},\phi ,\xi ,\eta ,g).$

If the almost paracontact metric structures $(\phi ,\xi ,\eta ,g)$\ and $(%
\tilde{\phi},\tilde{\xi},\tilde{\eta},\tilde{g})$ are related with (\ref%
{deformatio}), then $(\tilde{\phi},\tilde{\xi},\tilde{\eta},\tilde{g})$ is
said to be $D$-homothetic to $(\phi ,\xi ,\eta ,g),$ namely, the almost
paracontact metric structure $(\tilde{\phi},\tilde{\xi},\tilde{\eta},\tilde{g%
})$ is obtained by a $D$\textit{-homothetic deformation} of the almost
paracontact metric structure $(\phi ,\xi ,\eta ,g)$. If $\alpha ^{2}=\beta $%
, then $D$-homothetic deformation will be called \textit{homothetic
deformation} \cite{TAN2}.

\begin{proposition}
\label{sasakian}If $(\phi ,\xi ,\eta ,g)$ is a \textit{quasi-para-Sasakian}
structure, then the structure $(\tilde{\phi},\tilde{\xi},\tilde{\eta},\tilde{%
g})$ is also \textit{quasi-para-Sasakian}. If $(\phi ,\xi ,\eta ,g)$ is 
\textit{para-Sasakian, then }$(\tilde{\phi},\tilde{\xi},\tilde{\eta},\tilde{g%
})~$is \textit{para-Sasakian if and only if }$\alpha =\beta $.
\end{proposition}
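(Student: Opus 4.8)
The plan is to verify directly, from the deformation formulas (\ref{deformatio}), the two defining conditions of a quasi-para-Sasakian structure---normality and closedness of the fundamental form---treating $\alpha$ and $\beta$ as nonzero constants, as is standard for $D$-homothetic deformations. The compatibility with the metric and the almost paracontact axioms is already recorded in the paragraph preceding the statement, so only these two conditions remain.

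First I would compute the deformed normality tensor $\tilde{N}^{(1)}$ from its definition (\ref{nijen}). Since $\tilde{\phi}=\phi$, the Nijenhuis torsion is unchanged, $[\tilde{\phi},\tilde{\phi}]=[\phi,\phi]$. For the remaining term, constancy of $\alpha$ gives $d\tilde{\eta}=\alpha\,d\eta$, while $\tilde{\xi}=\frac{1}{\alpha}\xi$, so the two factors of $\alpha$ cancel and $2\,d\tilde{\eta}(X,Y)\tilde{\xi}=2\,d\eta(X,Y)\xi$. Hence $\tilde{N}^{(1)}=N^{(1)}=0$, and normality is preserved. Next I would compute the deformed fundamental form from (\ref{3}): using $\tilde{\phi}=\phi$ together with $\eta\circ\phi=0$ from (\ref{2}), the correction term in $\tilde{g}$ is annihilated because it carries the factor $\eta(\phi Y)=0$, leaving $\tilde{\Phi}(X,Y)=\tilde{g}(X,\phi Y)=\beta\,g(X,\phi Y)=\beta\,\Phi(X,Y)$. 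As $\beta$ is constant, $d\tilde{\Phi}=\beta\,d\Phi=0$, so $(\tilde{\phi},\tilde{\xi},\tilde{\eta},\tilde{g})$ is quasi-para-Sasakian.

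For the second statement I would reuse the two identities just obtained, $\tilde{\Phi}=\beta\,\Phi$ and $d\tilde{\eta}=\alpha\,d\eta$. When $(\phi,\xi,\eta,g)$ is para-Sasakian we have $\Phi=d\eta$, and since normality of the deformed structure is already established, the sole remaining requirement for $(\tilde{\phi},\tilde{\xi},\tilde{\eta},\tilde{g})$ to be para-Sasakian is $\tilde{\Phi}=d\tilde{\eta}$, which reads $(\beta-\alpha)\,d\eta=0$. The one point that needs more than bookkeeping---and the crux of the ``only if'' direction---is the observation that $d\eta=\Phi$ does not vanish. This follows because $\phi$ restricts to a nondegenerate almost paracomplex structure on $D=\ker\eta$ and $g$ is nondegenerate; concretely, in a $\phi$-basis one has $\Phi(X_{i},Y_{i})=g(X_{i},\phi Y_{i})=g(X_{i},X_{i})=1$, since $\phi Y_{i}=\phi^{2}X_{i}=X_{i}$ for $X_{i}\in D$. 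Consequently $(\beta-\alpha)\,d\eta=0$ forces $\beta=\alpha$, and conversely $\beta=\alpha$ gives $\tilde{\Phi}=\alpha\,d\eta=d\tilde{\eta}$, establishing the asserted equivalence.
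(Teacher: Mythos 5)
Your proposal is correct and takes essentially the same approach as the paper: the paper's proof is just the one-line remark that the assertion follows from Definition \ref{mert} and the deformation formulas (\ref{deformatio}), and your computations ($\tilde{N}^{(1)}=N^{(1)}$ via the cancellation of $\alpha$, $\tilde{\Phi}=\beta\,\Phi$ via $\eta\circ\phi=0$, and the nonvanishing of $\Phi=d\eta$ forcing $\alpha=\beta$) are precisely the details that this direct verification requires.
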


\begin{proof}
By virtue of Definition \ref{mert} and (\ref{deformatio}), we obtain the
assertion.
\end{proof}

\begin{lemma}
\label{nambla}Let $(\tilde{\phi},\tilde{\xi},\tilde{\eta},\tilde{g})$ be a 
\textit{quasi-para-Sasakian} structure obtained from $(\phi ,\xi ,\eta ,g)$
by a $D$-homothetic deformation. Then we have the following relation between
the Levi-Civita connections $\tilde{\nabla}$and $\nabla .$%
\begin{equation}
\tilde{\nabla}_{X}Y=\nabla _{X}Y+\left( \frac{\alpha ^{2}}{\beta }-1\right)
(\eta (Y)\mathcal{A}X+\eta (X)\mathcal{A}Y).  \label{i00}
\end{equation}
\end{lemma}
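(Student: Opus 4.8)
The plan is to exploit the defining characterization of the Levi-Civita connection: $\tilde{\nabla}$ is the \emph{unique} affine connection on $(M^{2n+1},\tilde{g})$ that is simultaneously torsion-free and metric, i.e.\ $\tilde{\nabla}\tilde{g}=0$. Rather than computing $\tilde{\nabla}$ from scratch via the Koszul formula, I would take the right-hand side of (\ref{i00}) as a candidate connection $\bar{\nabla}$ and verify that it enjoys these two properties; uniqueness then forces $\bar{\nabla}=\tilde{\nabla}$. Writing $c:=\frac{\alpha^{2}}{\beta}-1$ and $B(X,Y):=c\,(\eta(Y)\mathcal{A}X+\eta(X)\mathcal{A}Y)$, the candidate is $\bar{\nabla}_{X}Y:=\nabla_{X}Y+B(X,Y)$.

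First I would record the two structural facts about $\mathcal{A}$ that do all the work. From Lemma \ref{properties1} the operator $\mathcal{A}$ is skew-symmetric with respect to $g$, namely $g(\mathcal{A}X,Y)=-g(X,\mathcal{A}Y)$, and combining this with $\mathcal{A}\xi=\nabla_{\xi}\xi=0$ from (\ref{P6}) gives $\eta(\mathcal{A}X)=g(\mathcal{A}X,\xi)=0$ for every $X$. These two identities, together with $g(\xi,\xi)=\eta(\xi)=1$, are the only properties of the quasi-para-Sasakian structure I expect to need.

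Torsion-freeness is immediate: the correction term $B$ is visibly symmetric in $X$ and $Y$, so $\bar{\nabla}_{X}Y-\bar{\nabla}_{Y}X=\nabla_{X}Y-\nabla_{Y}X=[X,Y]$. The substantive step is to check $\bar{\nabla}\tilde{g}=0$, i.e.\ that $X\tilde{g}(Y,Z)=\tilde{g}(\bar{\nabla}_{X}Y,Z)+\tilde{g}(Y,\bar{\nabla}_{X}Z)$ holds identically. Here I would expand the left side using $\tilde{g}=\beta g+(\alpha^{2}-\beta)\eta\otimes\eta$, the compatibility $\nabla g=0$, and $X(\eta(Y))=(\nabla_{X}\eta)Y+\eta(\nabla_{X}Y)=g(\mathcal{A}X,Y)+\eta(\nabla_{X}Y)$; on the right side I would insert $\bar{\nabla}_{X}Y=\nabla_{X}Y+B(X,Y)$ and use $\eta(\mathcal{A}X)=0$ to annihilate the $\eta\otimes\eta$ contributions of $B$.

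I anticipate the bookkeeping of the $(\alpha^{2}-\beta)=\beta c$ terms to be the main (though routine) obstacle. The crucial cancellations are that the two $\eta(X)$-terms coming from $B$ combine into $\eta(X)\bigl(g(\mathcal{A}Y,Z)+g(\mathcal{A}Z,Y)\bigr)=0$ by skew-symmetry of $\mathcal{A}$, while the remaining $\eta(Y)$- and $\eta(Z)$-terms on the right reproduce exactly the $g(\mathcal{A}X,\cdot)$ and $\eta(\nabla_{X}\cdot)$ terms generated by differentiating $\tilde{g}$ on the left. Once both sides are seen to agree, uniqueness of the Levi-Civita connection yields $\tilde{\nabla}=\bar{\nabla}$, which is precisely (\ref{i00}). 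As an alternative derivation (rather than verification) one could feed $\tilde{g}$ into the Koszul formula and symmetrize over cyclic permutations of $X,Y,Z$ to solve directly for the difference tensor, again using only skew-symmetry of $\mathcal{A}$ and $\eta(\mathcal{A}X)=0$; this produces the same answer but is computationally heavier.
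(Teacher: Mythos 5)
Your proof is correct, but it takes a genuinely different route from the paper's. The paper derives (\ref{i00}) constructively: it writes the Koszul formula for $2\tilde{g}(\tilde{\nabla}_{X}Y,Z)$, substitutes $\tilde{g}=\beta g+(\alpha ^{2}-\beta )\eta \otimes \eta$ and uses the skew-symmetry (\ref{p1}), arriving at an identity (\ref{i22}) in which the unknown connection still appears through both $g(\tilde{\nabla}_{X}Y,Z)$ and $\eta (\tilde{\nabla}_{X}Y)$; it then sets $Z=\xi$ to extract $\eta (\tilde{\nabla}_{X}Y)=\eta (\nabla _{X}Y)$ (equation (\ref{i33})) and solves for $\tilde{\nabla}_{X}Y$. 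You instead treat the right-hand side of (\ref{i00}) as an ansatz $\bar{\nabla}$ and check that it is torsion-free and $\tilde{g}$-metric, concluding by uniqueness of the Levi-Civita connection. Your two working identities are available in the paper and suffice: skew-symmetry of $\mathcal{A}$ is (\ref{p1}), and $\eta (\mathcal{A}X)=0$ follows from (\ref{p1}) together with $\mathcal{A}\xi =\nabla _{\xi }\xi =0$ of (\ref{P6}) (or even more directly from $2g(\nabla _{X}\xi ,\xi )=X(g(\xi ,\xi ))=0$); note that this last identity plays in your argument exactly the role that the substitution $Z=\xi$ plays in the paper's, namely neutralizing the $\eta \otimes \eta$ part of $\tilde{g}$ against the correction tensor. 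What the paper's computation buys is that it produces the formula without knowing it in advance; what your verification buys is a cleaner logical structure with lighter bookkeeping, at the harmless cost (the statement supplies the formula) of having to guess the answer. The Koszul alternative you mention at the end is precisely the paper's proof.
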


\begin{proof}
By Koszul formula we have 
\begin{eqnarray*}
2\tilde{g}(\tilde{\nabla}_{X}Y,Z) &=&X\tilde{g}(Y,Z)+Y\tilde{g}(X,Z)-Z\tilde{%
g}(X,Y) \\
&&+\tilde{g}(\left[ X,Y\right] ,Z)+\tilde{g}(\left[ Z,X\right] ,Y)+\tilde{g}(%
\left[ Z,Y\right] ,X),
\end{eqnarray*}%
for any vector fields $X,Y,Z$. Using $\tilde{g}=\beta g+(\alpha ^{2}-\beta
)\eta \otimes \eta $ and (\ref{p1}) in the last equation, we obtain%
\begin{equation*}
2\tilde{g}(\tilde{\nabla}_{X}Y,Z)=2\beta g(\nabla _{X}Y,Z)+2(\alpha
^{2}-\beta )\left[ \eta (\nabla _{X}Y)\eta (Z)+g(Z,\mathcal{A}X)\eta (Y)+g(Z,%
\mathcal{A}Y)\eta (X)\right] .
\end{equation*}%
Since $g(Z,\mathcal{A}Y)=g(\mathcal{A}Y,Z),$ we get%
\begin{eqnarray}
2\tilde{g}(\tilde{\nabla}_{X}Y,Z) &=&2\beta g(\nabla _{X}Y,Z)  \label{i} \\
&&+2(\alpha ^{2}-\beta )\left[ \eta (X)g(\mathcal{A}Y,Z)+\eta (Y)g(\mathcal{A%
}X,Z)+\eta (Z)\eta (\nabla _{X}Y)\right] .  \notag
\end{eqnarray}%
Moreover, $\tilde{g}(\tilde{\nabla}_{X}Y,Z)$ is equal to 
\begin{equation}
\beta g(\tilde{\nabla}_{X}Y,Z)+(\alpha ^{2}-\beta )\eta (\tilde{\nabla}%
_{X}Y)\eta (Z).  \label{i1}
\end{equation}%
Substituting (\ref{i1}) in (\ref{i}), we obtain%
\begin{eqnarray}
&&\beta g(\tilde{\nabla}_{X}Y,Z)+(\alpha ^{2}-\beta )\eta (\tilde{\nabla}%
_{X}Y)\eta (Z)  \notag \\
&=&\beta g(\nabla _{X}Y,Z)+(\alpha ^{2}-\beta )\left[ \eta (X)g(\mathcal{A}%
Y,Z)+\eta (Y)g(\mathcal{A}X,Z)+\eta (Z)\eta (\nabla _{X}Y)\right] .
\label{i22}
\end{eqnarray}%
Setting $Z=\xi $ in (\ref{i22}) and using (\ref{p1}), we get%
\begin{equation}
\eta (\tilde{\nabla}_{X}Y)=\eta (\nabla _{X}Y).  \label{i33}
\end{equation}%
(\ref{i00}) is a direct consequence of (\ref{i22}) and (\ref{i33}).
\end{proof}

\begin{proposition}
\label{relations}Let $(M^{2n+1},\phi ,\xi ,\eta ,g)$ and $(\tilde{M}^{2n+1},%
\tilde{\phi},\tilde{\xi},\tilde{\eta},\tilde{g})$ are locally $D$-homothetic 
\textit{quasi-para-Sasakian} manifolds. Then following identities hold:%
\begin{equation}
\overset{\sim }{\mathcal{A}}X=\frac{\alpha }{\beta }\mathcal{A}X,  \label{i5}
\end{equation}%
\begin{equation}
\tilde{g}(\mathcal{A}X,Y)=\alpha g(\mathcal{A}X,Y),  \label{i6}
\end{equation}%
\begin{eqnarray}
\tilde{R}(X,Y)Z &=&R(X,Y)Z  \notag \\
&&-\left( \frac{\alpha ^{2}}{\beta }-1\right) \left\{ g(\mathcal{A}Y,Z)%
\mathcal{A}X-g(\mathcal{A}X,Z)\mathcal{A}Y-2g(\mathcal{A}X,Y)\mathcal{A}%
Z\right\}  \notag \\
&&+\left( \frac{\alpha ^{2}}{\beta }-1\right) ^{2}\left\{ \eta (X)\eta (Z)%
\mathcal{A}^{2}Y-\eta (Y)\eta (Z)\mathcal{A}^{2}X\right\}  \notag \\
&&+\left( \frac{\alpha ^{2}}{\beta }-1\right) \left\{ \eta (X)R(\xi
,Y)Z+\eta (Y)R(X,\xi )Z+\eta (Z)R(X,Y)\xi \right\} ,  \label{i777}
\end{eqnarray}%
for any vector fields $X,Y,Z.$
\end{proposition}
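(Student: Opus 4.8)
The plan is to handle the three identities in order, deriving \eqref{i5} and \eqref{i6} directly from Lemma~\ref{nambla} and reserving the real work for the curvature identity \eqref{i777}.

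For \eqref{i5} I would compute $\tilde{\mathcal{A}}X=\tilde{\nabla}_X\tilde{\xi}=\tfrac{1}{\alpha}\tilde{\nabla}_X\xi$, using that $\alpha$ is constant, and then apply \eqref{i00} with $Y=\xi$. Since $\eta(\xi)=1$ and $\mathcal{A}\xi=\nabla_\xi\xi=0$ by \eqref{P6}, all but one term collapses and $\tilde{\nabla}_X\xi=\tfrac{\alpha^2}{\beta}\mathcal{A}X$, whence $\tilde{\mathcal{A}}X=\tfrac{\alpha}{\beta}\mathcal{A}X$. For \eqref{i6} I would expand $\tilde g$ by \eqref{deformatio}; the correction term carries the factor $\eta(\mathcal{A}X)=g(\mathcal{A}X,\xi)$, which vanishes by \eqref{p1} taken with $Y=\xi$, so that $\tilde g(\mathcal{A}X,Y)=\beta g(\mathcal{A}X,Y)$. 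Substituting $\mathcal{A}=\tfrac{\beta}{\alpha}\tilde{\mathcal{A}}$ from \eqref{i5} then yields $\tilde g(\tilde{\mathcal{A}}X,Y)=\alpha g(\mathcal{A}X,Y)$, the asserted relation.

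The core is \eqref{i777}. Writing $\tilde{\nabla}_XY=\nabla_XY+T(X,Y)$ with the symmetric $(1,2)$-tensor $T(X,Y)=\bigl(\tfrac{\alpha^2}{\beta}-1\bigr)\bigl(\eta(Y)\mathcal{A}X+\eta(X)\mathcal{A}Y\bigr)$ read off from \eqref{i00}, both connections are torsion-free and their curvatures are related by
\[
\tilde{R}(X,Y)Z=R(X,Y)Z+(\nabla_XT)(Y,Z)-(\nabla_YT)(X,Z)+T(X,T(Y,Z))-T(Y,T(X,Z)).
\]
I would establish this once by inserting $\tilde{\nabla}=\nabla+T$ into $\tilde R(X,Y)Z=[\tilde\nabla_X,\tilde\nabla_Y]Z-\tilde\nabla_{[X,Y]}Z$, the mixed and $[X,Y]$ terms cancelling because $\nabla$ is torsion-free and $T$ symmetric. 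It then remains to evaluate the two brackets.

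For the covariant-derivative bracket I would expand $(\nabla_XT)(Y,Z)$ using $(\nabla_X\eta)W=g(\mathcal{A}X,W)$ and $\nabla_X(\mathcal{A}Y)-\mathcal{A}(\nabla_XY)=(\nabla_X\mathcal{A})Y$; the decisive input is \eqref{R1}, which gives $(\nabla_X\mathcal{A})Y=R(X,\xi)Y$. Antisymmetrising in $X,Y$ produces the algebraic terms $g(\mathcal{A}X,Z)\mathcal{A}Y-g(\mathcal{A}Y,Z)\mathcal{A}X+2g(\mathcal{A}X,Y)\mathcal{A}Z$, where the factor $2$ comes from the skew-symmetry \eqref{p1}, together with curvature terms $\eta(Y)R(X,\xi)Z-\eta(X)R(Y,\xi)Z$ and $\eta(Z)\bigl(R(X,\xi)Y-R(Y,\xi)X\bigr)$; the first Bianchi identity reduces the last parenthesis to $R(X,Y)\xi$. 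For the quadratic bracket, the identity $\eta(\mathcal{A}\,\cdot)=0$ forces $\eta(T(Y,Z))=0$, so $T(X,T(Y,Z))=\bigl(\tfrac{\alpha^2}{\beta}-1\bigr)^2\eta(X)\bigl(\eta(Z)\mathcal{A}^2Y+\eta(Y)\mathcal{A}^2Z\bigr)$, and on antisymmetrising the $\mathcal{A}^2Z$ terms cancel, leaving precisely the squared-coefficient group of \eqref{i777}. Collecting the three contributions and writing $\eta(X)R(\xi,Y)Z=-\eta(X)R(Y,\xi)Z$ reproduces \eqref{i777}. I expect the only subtle steps to be the use of \eqref{R1} to trade $\nabla\mathcal{A}$ for curvature and the Bianchi reduction of $R(X,\xi)Y-R(Y,\xi)X$; the remainder is careful bookkeeping.
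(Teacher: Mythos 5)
Your proposal is correct and takes essentially the same route as the paper: the paper likewise obtains \eqref{i5} by setting $Y=\xi$ in \eqref{i00} together with $\tilde{\xi}=\frac{1}{\alpha}\xi$ and \eqref{P6}, and gets \eqref{i777} by expanding $\tilde{R}(X,Y)Z=[\tilde{\nabla}_{X},\tilde{\nabla}_{Y}]Z-\tilde{\nabla}_{[X,Y]}Z$ using \eqref{i00}, \eqref{p1} and \eqref{R1}, so your difference-tensor formulation is just a tidier organization of that same computation. Your reading of \eqref{i6} as $\tilde{g}(\tilde{\mathcal{A}}X,Y)=\alpha g(\mathcal{A}X,Y)$ (rather than with the untilded $\mathcal{A}$ on the left, which would produce the factor $\beta$) is the intended one, since by the paper's convention $g(\mathcal{A}X,Y)$ abbreviates $(\nabla_{X}\eta)Y$.
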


\begin{proof}
After setting $Y=\xi $ in (\ref{i00}), if we use $\tilde{\xi}=\frac{1}{%
\alpha }\xi $ and (\ref{P6}), we get (\ref{i5}). Using (\ref{deformatio})
and (\ref{i00}), after some calculations one can obtain (\ref{i6}). From the
curvature formula 
\begin{equation*}
\tilde{R}(X,Y)Z=[\tilde{\nabla}_{X},\tilde{\nabla}_{Y}]Z-\tilde{\nabla}%
_{[X,Y]}Z,
\end{equation*}%
Eq. (\ref{p1}), (\ref{R1}) and (\ref{i00}), after a straightforward
computation one can get (\ref{i777}).
\end{proof}

Since the proof of the following proposition is quite similar to Proposition
4.4 of \cite{OL1}, so we don't give the proof of it.

\begin{proposition}
\label{equivalent}Let $(\phi ,\xi ,\eta ,g)$ be a \textit{%
quasi-para-Sasakian structure.} Then the following assertions are equivalent
to each other:
\end{proposition}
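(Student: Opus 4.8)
The plan is to prove all the listed assertions equivalent by a single cyclic chain of implications organized around the operator $\mathcal{A}=\nabla\xi$, since after unwinding the definitions each assertion is really a statement about $\mathcal{A}$. By (\ref{p1}) this operator is skew-symmetric, by (\ref{P2}) it commutes with $\phi$, and by (\ref{P6}) it annihilates $\xi$. The \emph{para-Sasakian} character corresponds to $\mathcal{A}=-\phi$ (through $\nabla\xi=-\phi+\phi h$ with $h=0$), its homothetic deformations to $\mathcal{A}=c\,\phi$ for a negative constant $c$, and the \emph{paracosymplectic} character to $\mathcal{A}=0$; any curvature-type assertion among the equivalences is to be read through Lemma \ref{2-form}. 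Because $\mathcal{A}$ commutes with $\phi$ it preserves the eigendistributions $D^{\pm}$ of $\phi$, which in signature $(n+1,n)$ are totally $g$-null and paired nondegenerately with one another, so skew-symmetry ties the block of $\mathcal{A}$ on $D^{+}$ to its adjoint block on $D^{-}$. I would first record these purely algebraic implications, so that the problem becomes one of locating $\mathcal{A}$ within the one-parameter family $c\,\phi$.

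Second, I would feed the curvature identities of Lemma \ref{2-form} into this dictionary. The identity $g(R(\xi,X)Y,\xi)=g(\mathcal{A}X,\mathcal{A}Y)$ of (\ref{R 1.1}) equals $-g(\mathcal{A}^{2}X,Y)$ by skew-symmetry, so any hypothesis on the $\xi$-sectional curvature is literally a hypothesis on the $g$-symmetric operator $\mathcal{A}^{2}$, recovered as an operator after polarization. To upgrade a pointwise eigenvalue into a local constant I would differentiate (\ref{P5}) covariantly and substitute into $R(\xi,X)Y=-(\nabla_{X}\mathcal{A})Y$ from (\ref{R1}); this expresses $\nabla\mathcal{A}$ through $R(\xi,\cdot)$ and, together with the first Bianchi identity, forces the relevant scalar to be constant. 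This is the step at which normality and $d\Phi=0$ enter essentially rather than only through the algebraic identities of Section 3.

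Third, for the deformation assertion I would invoke Proposition \ref{relations}. Assuming the operator condition $\mathcal{A}=c\,\phi$ with $c<0$, the transformation law (\ref{i5}) gives $\widetilde{\mathcal{A}}=\tfrac{\alpha}{\beta}c\,\widetilde{\phi}$, so choosing the parameters of (\ref{deformatio}) with $\beta=\alpha^{2}$ and $\alpha=-1/c$ yields $\widetilde{\mathcal{A}}=-\widetilde{\phi}$, which is exactly the \emph{para-Sasakian} normalization; Proposition \ref{sasakian} then certifies that the deformed structure is \emph{quasi-para-Sasakian} of the required type, and the homothetic curvature law (\ref{i777}) transports the curvature data accordingly. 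Reading (\ref{i5}) backwards gives the converse implication, while the degenerate branch $c=0$, i.e. $\mathcal{A}=0$, collapses to $d\eta=0$ and $\nabla\xi=0$ and hence to the \emph{paracosymplectic} condition. This reproduces, in the paracontact setting, the scheme of Proposition 4.4 of \cite{OL1}.

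The step I expect to be the main obstacle is passing from the quadratic datum $\mathcal{A}^{2}=c\,\phi^{2}$ to the sharp operator form $\mathcal{A}=c'\,\phi$, together with control of the sign of $c$. Over a positive-definite metric a skew operator has purely imaginary spectrum, so $\mathcal{A}^{2}\le 0$ automatically and $\mathcal{A}$ is determined by its square up to the obvious ambiguity; here the null eigendistributions $D^{\pm}$ remove that automatic sign, and since each carries no induced metric one cannot diagonalize $\mathcal{A}$ on $D^{+}$ by symmetry alone. One must therefore use the pairing of $D^{+}$ with $D^{-}$ and the commutation with $\phi$ to pin the two blocks to $\mp\sqrt{-c}$, and in particular rule out the genuinely nilpotent case $\mathcal{A}\neq 0=\mathcal{A}^{2}$ that cannot arise in the Riemannian situation. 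Settling this sign is precisely what confines the constant-curvature analysis of Section 5 to the range $K\le 0$, and it is the one place where the argument of \cite{OL1} must be adapted to, rather than transcribed from, the contact case.
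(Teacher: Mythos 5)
The paper does not actually prove this proposition; it appeals to Proposition 4.4 of \cite{OL1}, so your attempt has to be measured against the argument that Section 4 is visibly set up to support: the equivalence of (i) being a $D$-homothetic deformation of a para-Sasakian structure, (ii) being a homothetic deformation of one, and (iii) $\mathcal{A}X=\lambda\phi X$ with $\lambda=\mathrm{const}\neq 0$. Your third paragraph contains the correct engine, namely (\ref{i5}) together with Proposition \ref{sasakian}, but its execution has three concrete defects. First, your parameter choice is inconsistent with the direction of deformation you wrote: from $\tilde{\mathcal{A}}=\frac{\alpha}{\beta}c\,\tilde{\phi}$ and $\beta=\alpha^{2}$, the choice $\alpha=-1/c$ gives $\tilde{\mathcal{A}}=-c^{2}\tilde{\phi}$, which equals $-\tilde{\phi}$ only when $c=\pm 1$; in that direction you need $\alpha=-c$, $\beta=c^{2}$ (the value $\alpha=-1/c$ is correct only when the deformation is applied to the para-Sasakian structure to produce the given one, in which case the transformation law must instead read $\tilde{\mathcal{A}}=-\frac{\alpha}{\beta}\tilde{\phi}$). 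Second, the restriction to $c<0$ is wrong and leaves a real hole: (\ref{deformatio}) permits $\alpha<0$ with $\beta>0$, so homothetic deformations of para-Sasakian structures realize every nonzero $\lambda=-\alpha/\beta$, and assertion (iii) carries no sign condition; as written, your chain never handles $\lambda>0$. Third, Proposition \ref{sasakian} certifies only that the deformed structure is quasi-para-Sasakian; to conclude it is para-Sasakian you still need the verification that $\tilde{\mathcal{A}}=-\tilde{\phi}$ forces $d\tilde{\eta}=\tilde{\Phi}$, e.g. $d\tilde{\eta}(X,Y)=\tilde{g}(\tilde{\mathcal{A}}X,Y)=-\tilde{g}(\tilde{\phi}X,Y)=\tilde{g}(X,\tilde{\phi}Y)=\tilde{\Phi}(X,Y)$, where the first equality uses the skew-symmetry of $\tilde{\mathcal{A}}$ (Lemma \ref{properties1} applied to the deformed structure).

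The remaining half of your proposal is aimed at obstacles that do not occur in this proposition. None of the three assertions is a curvature hypothesis, none involves $\mathcal{A}^{2}$, and the constancy of $\lambda$ is either hypothesized (in (iii)) or automatic (in the converse direction, because $\alpha$ and $\beta$ are constants); so the Bianchi-identity argument of your second paragraph and the entire reconstruction problem of your fourth paragraph --- recovering $\mathcal{A}$ from $\mathcal{A}^{2}$ and excluding nilpotent blocks on the null distributions $D^{\pm}$ --- are not needed here. Those difficulties belong to Theorem \ref{teo.}, and the paper's proof of that theorem is arranged precisely so as to avoid them: it never solves a quadratic equation for $\mathcal{A}$, but extracts $\mathcal{A}Y=\lambda\phi Y$ linearly from (\ref{irem3}) by substituting $\phi Y$ for $Y$ and using (\ref{P3}). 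Finally, the paracosymplectic case $\mathcal{A}=0$ that you treat as a ``degenerate branch'' is not one of the listed assertions; it is excluded by the requirement $\lambda\neq 0$.
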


$i)$ $(\phi ,\xi ,\eta ,g)$ \textit{can be obtained by a }$D$\textit{%
-homothetic deformation of a} \textit{para-Sasakian structure,}

$ii)$ $(\phi ,\xi ,\eta ,g)$ \textit{can be obtained by a homothetic
deformation of a} \textit{para-Sasakian structure,}

$iii)$ $\mathcal{A}X=\lambda \phi X$, for $\lambda $ $=$ const.$\neq $ 0.

\section{Quasi-Para-Sasakian manifolds of constant curvature}

\begin{theorem}
\label{teo.}Let $(M^{2n+1},\phi ,\xi ,\eta ,g)$ be a \textit{%
quasi-para-Sasakian manifold} of constant curvature $K$. Then $K\leq 0.$
Furthermore,
\end{theorem}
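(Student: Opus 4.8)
The plan is to turn the constant-curvature hypothesis $R(X,Y)Z = K\bigl(g(Y,Z)X - g(X,Z)Y\bigr)$ into algebraic information about the operator $\mathcal{A}=\nabla\xi$ of Definition \ref{AX}, and then read off the three conclusions. First I would feed this expression into the identity $g(R(\xi,X)Y,\xi)=g(\mathcal{A}X,\mathcal{A}Y)$ of Lemma \ref{2-form}. Since $g(\xi,\xi)=\eta(\xi)=1$, its left-hand side equals $K\bigl(g(X,Y)-\eta(X)\eta(Y)\bigr)$, so that $g(\mathcal{A}X,\mathcal{A}Y)=K\bigl(g(X,Y)-\eta(X)\eta(Y)\bigr)$, equivalently $\mathcal{A}^{2}=-K\phi^{2}$ by the skew-symmetry \eqref{p1}. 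A direct trace computation then yields the Ricci and $*$-Ricci tensors: $S=2nK\,g$ (hence $r=2n(2n+1)K$) and, using that $\phi$ is skew-adjoint with $\mathrm{tr}\,\phi=0$, $S^{*}(Y,Z)=K\bigl(\eta(Y)\eta(Z)-g(Y,Z)\bigr)$ (hence $r^{*}=-2nK$).

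The sign of $K$ now drops out of \eqref{S2}. Indeed $r^{*}+r=4n^{2}K$, while \eqref{S2} asserts $r^{*}+r=-\mathrm{tr}^{2}(\phi\mathcal{A})$; therefore $4n^{2}K=-\mathrm{tr}^{2}(\phi\mathcal{A})\le 0$, which forces $K\le 0$, with equality precisely when $\mathrm{tr}(\phi\mathcal{A})=0$. Equivalently, substituting the curvature forms of $S$ and $S^{*}$ into \eqref{S1} collapses everything to the single operator identity $\mathrm{tr}(\phi\mathcal{A})\,\mathcal{A}=-2nK\,\phi$, whose $\phi$-trace reproduces $\mathrm{tr}^{2}(\phi\mathcal{A})=-4n^{2}K$.

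For $K<0$ I would use that operator identity. Here $\mathrm{tr}^{2}(\phi\mathcal{A})=-4n^{2}K>0$, so $t:=\mathrm{tr}(\phi\mathcal{A})$ is nowhere zero; since $t^{2}$ is the constant $-4n^{2}K$ and $M$ is connected, $t$ is a nonzero constant. Dividing gives $\mathcal{A}=\lambda\phi$ with $\lambda=-2nK/t$ a nonzero constant (consistently $\lambda^{2}=-K$). By the equivalence in Proposition \ref{equivalent}, $\mathcal{A}X=\lambda\phi X$ with constant $\lambda\neq 0$ means exactly that the structure is obtained from a para-Sasakian one by a homothetic deformation, which is conclusion (ii).

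The case $K=0$ is where I expect the real difficulty. Here $M$ is flat, so Lemma \ref{2-form} gives $g(\mathcal{A}X,\mathcal{A}Y)\equiv 0$ and, via \eqref{R1}, $\nabla\mathcal{A}=0$; I must upgrade this to $\mathcal{A}=0$, since being paracosymplectic means $d\eta=0$ and a short computation gives $d\eta(X,Y)=g(\mathcal{A}X,Y)$. The obstruction is the signature: unlike the Riemannian (Sasakian) analogue, where $g(\mathcal{A}X,\mathcal{A}X)=0$ forces $\mathcal{A}X=0$ by positive-definiteness, here $g$ restricted to $D=\ker\eta$ is neutral of type $(n,n)$ and $\mathrm{im}\,\mathcal{A}$ is null, so this one-line shortcut is unavailable. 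To close the gap I would exploit the full flat curvature in the identity \eqref{rxy2} for $(R(X,Y)\phi)Z$, together with $\mathcal{A}\phi=\phi\mathcal{A}$ from \eqref{P2} and the skew pairing between the eigendistributions $D^{\pm}$ of $\phi$; these severely constrain $\mathcal{A}|_{D^{\pm}}$, and I expect that combining them with the parallelism $\nabla\mathcal{A}=0$ (which makes $\ker\mathcal{A}$ and $\mathrm{im}\,\mathcal{A}$ parallel) forces $\mathcal{A}\equiv 0$, hence $d\eta=0$ and paracosymplecticity. Pinning down this final implication cleanly, rather than the sign $K\le 0$, is the step I would budget the most care for.
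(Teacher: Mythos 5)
Your arguments for $K\le 0$ and for the case $K<0$ are correct, and they follow essentially the paper's route: compute $r=2n(2n+1)K$ and $r^{\ast}=-2nK$, read $4n^{2}K=-\mathrm{tr}^{2}(\phi \mathcal{A})\le 0$ off (\ref{S2}), then extract $\mathcal{A}=\lambda \phi$ with $\lambda$ a nonzero constant from (\ref{S1}) and invoke Proposition \ref{equivalent}. In fact your (\ref{S2})-argument is the one the paper needs but does not spell out: its assertion that $K\le 0$ ``can be seen from Lemma \ref{2-form}'' is exactly the positive-definiteness shortcut that is unavailable in signature $(n+1,n)$, as you observe.

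The step you could not close, namely $K=0\Rightarrow \mathcal{A}=0$, is not merely a gap in your write-up: it is false for $n\ge 2$, so neither your program (pointwise identities plus $\nabla \mathcal{A}=0$) nor any other can complete it. Pointwise, $R=0$ together with (\ref{R 1.1}) gives $\mathcal{A}^{2}=0$, and (\ref{RXYY}) (equivalently (\ref{rxy2})) only forces $\mathcal{A}|_{D^{\pm}}$ to have rank at most one; a rank-one nilpotent $\mathcal{A}$ commuting with $\phi$ and parallel satisfies every identity in the paper, and such structures exist. Concretely, take $M=\mathbb{R}^{5}$ with coordinates $(x_{1},x_{2},y_{1},y_{2},t)$ and the flat metric $g$ of signature $(3,2)$ determined by $g(\partial_{x_{i}},\partial_{y_{j}})=\delta_{ij}$, $g(\partial_{t},\partial_{t})=1$, all other products zero; set $\xi =\partial_{t}-y_{2}\partial_{x_{1}}+y_{1}\partial_{x_{2}}$, $\eta =dt-y_{2}\,dy_{1}+y_{1}\,dy_{2}$, and define $\phi$ by
\begin{align*}
\phi \partial_{x_{1}}&=\partial_{x_{2}},\qquad \phi \partial_{x_{2}}=\partial_{x_{1}},\qquad \phi \partial_{t}=-y_{1}\partial_{x_{1}}+y_{2}\partial_{x_{2}},\\
\phi \partial_{y_{1}}&=\tfrac{1}{2}(y_{1}^{2}-y_{2}^{2})\,\partial_{x_{2}}-\partial_{y_{2}}+y_{1}\partial_{t},\qquad
\phi \partial_{y_{2}}=\tfrac{1}{2}(y_{2}^{2}-y_{1}^{2})\,\partial_{x_{1}}-\partial_{y_{1}}-y_{2}\partial_{t}
\end{align*}
(this $\phi$ is obtained by integrating (\ref{P5}) with $\mathcal{A}$ the constant skew operator $\partial_{y_{1}}\mapsto \partial_{x_{2}}$, $\partial_{y_{2}}\mapsto -\partial_{x_{1}}$, zero on the other coordinate fields). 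Direct verification shows this is an almost paracontact metric structure with $\nabla \xi =\mathcal{A}$, that $N^{(1)}=0$, and that $\Phi =-dx_{1}\wedge dy_{2}-dx_{2}\wedge dy_{1}+\tfrac{1}{2}(y_{2}^{2}-y_{1}^{2})\,dy_{1}\wedge dy_{2}-y_{1}\,dy_{1}\wedge dt+y_{2}\,dy_{2}\wedge dt$ is closed; hence it is a flat quasi-para-Sasakian manifold, yet $d\eta =2\,dy_{1}\wedge dy_{2}\neq 0$, so it is not paracosymplectic. (In dimension $3$ the conclusion does hold, since there $D^{\pm}$ are one-dimensional and rank $\le 1$ nilpotency forces $\mathcal{A}=0$.) Note finally that the paper's own proof of this case, ``if $K=0$, by (\ref{R1.3}) we get $\mathcal{A}=0$,'' commits precisely the signature error you identified: $\mathrm{tr}\,\mathcal{A}^{2}=0$ does not imply $\mathcal{A}=0$ for a $g$-skew-symmetric operator when $g$ is indefinite.
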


\textit{\ \ \ \ \ \ \ }$\bullet $\textit{If }$K=0,$ \textit{the manifold is
paracosymplectic,}

\textit{\ \ \ \ \ \ \ }$\bullet $\textit{If }$K<0,$\textit{\ the structure }$%
(\phi ,\xi ,\eta ,g)$\textit{\ is obtained by a homothetic deformation of a
para-Sasakian structure on }$M^{2n+1}.$

\begin{proof}
One can see that $K\leq 0$ from Lemma \ref{2-form}. If $K=0,$\ by (\ref{R1.3}%
), we get $\mathcal{A}=0$. Hence, from (\ref{P5}), we have $\nabla \phi =0$.
This means the manifold is paracosymplectic. Assume that $K<0$. The claim
follows from Proposition \ref{equivalent}. So, we should obtain $\mathcal{A}%
X=\lambda \phi X$, for $\alpha $ $=$ const.$\neq $ 0. After straightforward
calculations we have $r=2n(2n+1)K$ and $r^{\ast }=-2nK$. If we use these
equations in (\ref{S2}), we obtain 
\begin{equation}
tr(\phi \mathcal{A})=2n\lambda ,\text{ so }K=-\lambda ^{2}.  \label{irem}
\end{equation}%
By direct calculations, we get%
\begin{eqnarray}
S(Y,Z) &=&2nKg(Y,Z)\text{ and }  \label{irem2} \\
S^{\ast }(Y,Z) &=&K(-g(Y,Z)+\eta (Y)\eta (Z))\overset{(\ref{R 1.1})}{=}-g(%
\mathcal{A}Y,\mathcal{A}Z).  \notag
\end{eqnarray}%
Making use of (\ref{irem}) and (\ref{irem2}) in (\ref{S1}), we deduce%
\begin{equation}
g(\mathcal{A}Y,\phi Z)=-\lambda (g(Y,Z)-\eta (Y)\eta (Z)).  \label{irem3}
\end{equation}%
Putting $\phi Y$ for $Y$ in (\ref{irem3}) and using (\ref{P3}), we have $%
\mathcal{A}Y=\lambda \phi Y$. This completes the proof.
\end{proof}

\section{Example}

Now, we will give an example of $3$\textit{-}dimensional\textit{\ }proper
quasi-para-Sasakian manifold.

\begin{example}
\label{ex2} We consider the $3$-dimensional manifold%
\begin{equation*}
M^{3}=\{(x,y,z)\in 
\mathbb{R}
^{3},z\neq 0\}
\end{equation*}%
and the vector fields%
\begin{equation*}
\phi e_{2}=e_{1}=4y\text{ }\frac{\partial }{\partial x}+z\frac{\partial }{%
\partial z},\text{ \ \ }\phi e_{1}=e_{2}=\frac{\partial }{\partial y},\text{
\ \ }\xi =e_{3}=\frac{\partial }{\partial x}.
\end{equation*}%
The $1$-form $\eta =dx-\frac{4y}{z}dz$ defines an almost paracontact
structure on $M$ with characteristic vector field $\xi =\frac{\partial }{%
\partial x}$. Let $g$, $\phi $ be the semi-Riemannian metric ($%
g(e_{1},e_{1})=-g(e_{2},e_{2})=g(\xi ,\xi )=1)$ and the $(1,1)$-tensor field
respectively given by 
\begin{eqnarray*}
g &=&\left( 
\begin{array}{ccc}
1 & 0 & -\frac{2y}{z} \\ 
0 & -1 & 0 \\ 
-\frac{2y}{z} & 0 & \frac{1+28y^{2}}{z^{2}}%
\end{array}%
\right) ,\text{ } \\
\phi &=&\left( 
\begin{array}{ccc}
0 & 4y & 0 \\ 
0 & 0 & \frac{1}{z} \\ 
0 & z & 0%
\end{array}%
\right) ,
\end{eqnarray*}%
with respect to the basis $\frac{\partial }{\partial x},\frac{\partial }{%
\partial y},\frac{\partial }{\partial z}$.
\end{example}

\textit{Using }$\nabla _{X}\xi =\beta \phi X$\textit{\ (see \cite{Welyczko1}%
) we have}%
\begin{equation*}
\begin{array}{ccc}
\nabla _{e_{1}}e_{1}=0,~ & \nabla _{e_{2}}e_{1}=-2\xi , & ~~~\nabla _{\xi
}e_{1}=2e_{2}, \\ 
\nabla _{e_{1}}e_{2}=2\xi , & \nabla _{e_{2}}e_{2}=0,~ & \nabla _{\xi
}e_{2}=2e_{1}, \\ 
\nabla _{e_{1}}\xi =2e_{2}, & ~~\nabla _{e_{2}}\xi =2e_{1}, & \nabla _{\xi
}\xi =0.%
\end{array}%
\end{equation*}%
\textit{Hence the manifold is a }$3$\textit{-dimensional quasi-para-Sasakian
manifold with }$\beta $\textit{\ is constant function. Using the above
equations, we obtain }%
\begin{equation}
\begin{array}{ccc}
R(e_{1},e_{2})\xi =0, & R(e_{2},\xi )\xi =-4e_{2}, & R(e_{1},\xi )\xi
=-4e_{1}, \\ 
R(e_{1},e_{2})e_{2}=-12e_{1}, & R(e_{2},\xi )e_{2}=-4\xi , & R(e_{1},\xi
)e_{2}=0, \\ 
R(e_{1},e_{2})e_{1}=-12e_{2}, & R(e_{2},\xi )e_{1}=0, & R(e_{1},\xi
)e_{1}=4\xi .%
\end{array}
\label{r1}
\end{equation}%
\textit{Using (\ref{r1}), we have constant scalar curvature as follows, }$%
r=S(e_{1},e_{1})-S(e_{2},e_{2})+S(\xi ,\xi )=8.$\textit{\ We want to remark
that this example is neither the paracosymplectic manifold nor the
para-Sasakian manifold example.}

\end{document}